\newtheorem{te}{Theorem}[section]
\newtheorem{os}[te]{Remark}
\newtheorem{lem}[te]{Lemma}
\numberwithin{equation}{section}
\def \l { \left( }
\def \r {\right) }
\def \ll { \left\lbrace }
\def \rr { \right\rbrace }
\begin{document}

\title[Population models at stochastic times]{Population models at stochastic times}
\author{Enzo Orsingher}
\email{enzo.orsingher@uniroma1.it}
\author{Costantino Ricciuti}
\email{costantino.ricciuti@uniroma1.it}
\author{Bruno Toaldo} 
 \email{bruno.toaldo@uniroma1.it}
\address{Department of Statistical Sciences, Sapienza - University of Rome}
\keywords{Non-linear birth processes, sublinear and linear death processes, sojourn times, fractional birth processes, random time}
\date{\today}
\subjclass[2010]{60G22; 60G55}

\begin{abstract}	
In this article, we consider  time-changed models of population evolution $\mathcal{X}^f(t)=\mathcal{X}(H^f(t))$, where $\mathcal{X}$ is a counting  process and $H^f$ is a subordinator with Laplace exponent $f$. In the case $\mathcal{X}$ is a pure birth process, we study  the form of the distribution, the intertimes between successive jumps and the condition of explosion (also in the case of killed subordinators). We also investigate the case where $\mathcal{X}$ represents a death process (linear or sublinear) and study the extinction probabilities as a function of the initial population size $n_0$. Finally, the subordinated linear birth-death process is considered. A special attention is devoted to the case where birth and death rates coincide; the sojourn times are also analysed.
\end{abstract}
	
\maketitle

\section{Introduction}
Birth and death processes can be applied in modelling many dynamical systems, such as cosmic showers, fragmentation processes, queueing systems, epidemics, population growth and aftershocks in earthquakes. The time-changed version of such processes has also been analysed since it is useful to describe the dynamics of various systems when the underlying environmental conditions randomly change.  For example, the fractional birth and death processes, studied in \citet{orspolber3, orspolber4, orspolber1, orspolber2}, are time-changed processes where the distribution of the  time is related to the fractional diffusion equations. On this point consult \citet{cahoy, cahoypol} for some applications and simulations. 

In this paper, we consider the case where the random time is a subordinator. Actually, subordinated Markov processes have been extensively studied since the Fifties. The case of birth and death processes merits however a further investigation and this is the role of the present paper. We consider here compositions of point processes $\mathcal{X}(t)$, $t>0$, with an arbitrary subordinator $H^f(t)$ related to the Bern\v{s}tein functions $f$. We denote such processes as $\mathcal{X}^f(t)= \mathcal{X}(H^f(t))$. The general form of $f$ is as follows
\begin{align} 
f(x)= \alpha+\beta x+\int _0^\infty (1-e^{-xs}) \nu(ds)    \qquad  \alpha \geq 0, \beta \geq 0,
\label{Bernstein}
\end{align}
where $\nu$ is the L\'evy measure satisfying
\begin{align} 
\int_0^\infty (s \wedge 1 ) \nu(ds) < \infty.
\label{misura di Levy}
\end{align}
In this paper we refer to the case $\alpha=\beta=0$, unless explicitly stated.
The structure of the paper is as follows: section 2  treats the subordinated non-linear birth process; section 3 deals with the subordinated linear and sublinear death processes;  section 4 analyses the linear birth-death process, with particular attention to the case where birth and death rates coincide.
In all three cases, we compute directly the state probabilities by means of the composition formula
\begin{align}
\Pr \ll \mathcal{X}^f(t)=k \rr = \int_0^{\infty} \Pr \ll \mathcal{X}(s)=k \rr \Pr \ll H^f(t) \in ds \rr.
\end{align}
Despite most of the subordinators do not possess an explicit form for the probability density function, the distribution of $\mathcal{X}(H^f(t))$ always presents a closed form in terms of the Laplace exponent $f$. We also study the transition probabilities, both for finite and infinitesimal time intervals. We emphasize that the subordinated point processes have a fundamental difference with respect to the classical ones, in that they perform upward or downward jumps of arbitrary size.  For infinitesimal time intervals, we provide a direct and simple proof of the following fact:
\begin{align}
\Pr \ll \mathcal{X}^f(t+dt)=k | \mathcal{X}^f(t)=r \rr = dt \int_0^{\infty} \Pr \ll \mathcal{X}(s)=k | \mathcal{X}(0)=r \rr \nu(ds),
\label{14b}
\end{align}
which is related to Bochner subordination (see \cite{phillips}).\\
The first case taken into account is that of a non-linear birth process with birth rates $\lambda_k$, $k\geq 1$, which is denoted by $\mathcal{N}(t)$.
The subordinated process $\mathcal{N}^f(t)$ does not explode if and only if the following condition is fullfilled
\begin{align}
\sum_{j=1}^\infty \frac{1}{\lambda_j} \, = \, \infty.
\end{align}
This is the same condition of non-explosion holding for the classical case. Such a condition ceases to be true if we consider a  L\'evy exponent with $\alpha \neq 0$, which is related to the so-called killed subordinator.
In this case, indeed, the process $\mathcal{N}^f(t)$ can explode in a finite time, even if $\mathcal{N}(t)$ does not; more precisely
\begin{align}
\Pr \ll \mathcal{N}^f(t)= \infty \rr = 1-e^{-\alpha t}.
\end{align}
We note that $\mathcal{N}^f(t)$ can be regarded as a process where upward jumps are separated by exponentially distribuited time intervals $Y_k$ such that
\begin{align}
\Pr \ll Y_k>t | \mathcal{N}^f(T_{k-1})=r \rr= e^{-f(\lambda_r) t}
\end{align}
where $T_{k-1}$ is the instant of the ($k-1$)-th jump.

In section 3 we study the subordinated linear and sublinear death processes, that we respectively denote by $M^f(t)$ and $\mathbb{M}^f(t)$, with an initial number of components $n_0$. We emphasize that in the  sublinear case the annihilation is initially slower, then accelerates when few survivors remain. So, despite $M^f(t)$ and $\mathbb{M}^f(t)$ present different state probabilities, we observe that the extinction probabilities coincide and we prove that they decrease for increasing values of $n_0$.

In section 4, the subordinated linear birth-death process $L^f(t)$ is considered. If the  birth and death rates coincide and  $H^f$ is a stable subordinator, we compute the mean sojourn time in each state and find, in some particular cases, the distribution of the intertimes between successive jumps. 
We finally study the probability density of the sojourn times, by giving a sketch of the derivation of their Laplace transforms.

\section{Subordinated non-linear birth process}
We consider in this section the process $ \mathcal{N}^f(t)=  \mathcal{N}(H^f(t))$, where $\mathcal{N}$ is a non-linear birth process  with one progenitor and rates $\lambda _k$, $k \geq 1 $, and $H^f(t)$ is a subordinator independent from $\mathcal{N}(t)$. It is well known that the state probabilities of $\mathcal{N}(t)$ read
\begin{align}
\Pr \ll\mathcal{N}(t)=k| \mathcal{N}(0)=1 \rr = \, & \begin{cases} \prod_{j=1}^{k-1} \lambda _ j\sum_{m=1}^k \frac{e^{-\lambda_m t }}{ \prod_{l=1 ,  l \neq m}^k (\lambda_l-\lambda_m )}, \qquad & k>1, \\    
e^{-t f(\lambda_1) }, & k=1.
\end{cases} 
\end{align}
The subordinated  process $\mathcal{N}^f(t)$ thus possesses the following distribution:
\begin{align}
\Pr \ll \mathcal{N}^f(t)=k|\mathcal{N}^f(0)=1 \rr \, = \, & \int_0^\infty  \Pr \ll \mathcal{N}(s)=k | \mathcal{N}(0)=1\rr \Pr \ll H^f(t) \in ds \rr \notag  \\
 \, = \, &  \begin{cases} \prod_{j=1}^{k-1} \lambda _ j\sum_{m=1}^k \frac{e^{-t \, f(\lambda _m )}}{ \prod_{l=1 ,  l \neq m}^k (\lambda_l-\lambda_m )}, \qquad & k>1, \\    
e^{-t f(\lambda_1) }, & k=1.
\end{cases} 
\label{non linear birth 1 progenitor}
\end{align}
The distribution  \eqref{non linear birth 1 progenitor} can be easily generalised to the case of $r$ progenitors  and reads
\begin{align}
\Pr \ll  \mathcal{N}^f(t)=r+k | \mathcal{N}^f(0)=r \rr \, = \,\begin{cases}
\prod_{j=r}^{r+k-1} \lambda _ j\sum_{m=r}^{r+k} \frac{e^{-tf(\lambda _m) }}{ \prod_{l=r ,  l \neq m}^{r+k} (\lambda_l-\lambda_m )}, \quad & k>0, \\ e^{-tf(\lambda_r) }, & k=0.
\end{cases}
\label{23} 
\end{align}
The subordinated process $\mathcal{N}^f(t)$ is time-homogeneous and Markovian. So, the last formula permits us to write
\begin{align}
&\Pr \ll  \mathcal{N}^f(t+dt)=r+k | \mathcal{N}^f(t)=r \rr \notag \\
 = \, & \begin{cases}
\prod_{j=r}^{r+k-1} \lambda _ j\sum_{m=r}^{r+k} \frac{1-dt f(\lambda_m )}{ \prod_{l=r ,  l \neq m}^{r+k} (\lambda_l-\lambda_m )}, \quad & k>0, \\
 1-dt f(\lambda_r), & k=0.
\label{rprog}
\end{cases}
\end{align}
To find an alternative expression for the transition probabilities we need the following
\begin{lem}
For any sequence of $k+1$ distinct positive numbers $\lambda_r, \lambda_{r+1} \cdots \lambda _{r+k}$ the following relationship holds:
\begin{align}
c_{r,k}=\sum_{m=r}^{r+k} \frac{1}{ \prod_{l=r ,  l \neq m}^{r+k} (\lambda_l-\lambda_m )}=0.
\label{Vandermonde}
\end{align}
\end{lem}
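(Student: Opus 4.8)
The plan is to read $c_{r,k}$ off a partial fraction decomposition. First I would introduce the rational function
\begin{equation}
g(z) = \prod_{l=r}^{r+k} \frac{1}{z-\lambda_l},
\end{equation}
which, since the $\lambda_l$ are distinct, has only simple poles, one at each $\lambda_m$. Decomposing into partial fractions gives $g(z) = \sum_{m=r}^{r+k} A_m/(z-\lambda_m)$, where the coefficients are the residues
\begin{equation}
A_m = \lim_{z\to\lambda_m}(z-\lambda_m)\, g(z) = \frac{1}{\prod_{l=r,\, l\neq m}^{r+k}(\lambda_m - \lambda_l)}.
\end{equation}

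Next I would extract the behaviour at infinity. Multiplying the identity by $z$ and letting $z\to\infty$, the left-hand side $z\,g(z)$ behaves like $z^{-k}$, since the denominator has degree $k+1$; hence it tends to $0$ as soon as $k\geq 1$, which is the relevant range (the case $k=0$ gives the empty product $c_{r,0}=1$). The right-hand side tends to $\sum_{m=r}^{r+k} A_m$. Comparing the two limits yields $\sum_{m=r}^{r+k} A_m = 0$. Finally I would reconcile the signs with the statement: each of the $k$ factors in $c_{r,k}$ satisfies $\lambda_l - \lambda_m = -(\lambda_m - \lambda_l)$, so that $\prod_{l\neq m}(\lambda_l-\lambda_m) = (-1)^k\prod_{l\neq m}(\lambda_m-\lambda_l)$ and therefore $c_{r,k} = (-1)^k\sum_{m=r}^{r+k} A_m = 0$.

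The computation is short and I do not anticipate a genuine analytic obstacle; the only points requiring care are the bookkeeping of the global sign $(-1)^k$ and the observation that the degree condition $k\geq 1$ is essential, the identity being false for a single node. An equivalent route, which I would note as an alternative, is to recognise $\sum_m 1/\prod_{l\neq m}(\lambda_m-\lambda_l)$ as the leading (degree-$k$) coefficient of the Lagrange interpolation polynomial of the constant function $1$ through the $k+1$ nodes $\lambda_r,\dots,\lambda_{r+k}$; since that polynomial is identically $1$, hence of degree $0<k$, the coefficient of $z^k$ must vanish. One could also argue by induction on $k$ exploiting the recursive structure of divided differences, but the partial fraction computation is the most direct.
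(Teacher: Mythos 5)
Your proof is correct, and it takes a genuinely different route from the paper. You argue algebraically: partial fraction decomposition of $g(z)=\prod_{l=r}^{r+k}(z-\lambda_l)^{-1}$, vanishing of $zg(z)$ at infinity when the denominator has degree $k+1\geq 2$, and then the sign bookkeeping, where the factor $(-1)^k$ is the same for every $m$ and hence factors out of the sum — all of this is handled correctly, and you are right to flag that the identity requires $k\geq 1$ (for $k=0$ one has $c_{r,0}=1$). The paper instead gives a probabilistic one-line argument: the identity is read off from the already-established distribution \eqref{23} of the subordinated process by letting $t\to 0$; since $\Pr\{\mathcal{N}^f(t)=r+k \mid \mathcal{N}^f(0)=r\}\to 0$ for $k>0$ while every exponential $e^{-tf(\lambda_m)}$ tends to $1$, the coefficient $\prod_j \lambda_j \cdot c_{r,k}$ must vanish (the paper also points to an adaptation of a computation in Theorem 2.1 of Orsingher--Polito, \emph{Fractional pure birth processes}, as an alternative). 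The paper's route is shorter given the formulas already derived, but it rests on stochastic continuity at $t=0$ (no jump in zero time) and on a formula that the lemma is then used to refine in the subsequent theorem; your argument is self-contained, purely algebraic, makes the boundary case $k=0$ explicit, and can be cited independently of the probabilistic setup, which removes even the appearance of circularity between the lemma and the transition-probability theorem it supports.
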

\begin{proof}
It is a consequence of \eqref{23} by letting $t \to 0$. An alternative proof can be obtained by suitably adapting the calculation in Theorem 2.1 of \cite{orspolber4}.
\end{proof}
We are now able to state the following theorem.
\begin{te}
For $k>r$ the transition probability takes the form
\begin{align}
\Pr \ll  \mathcal{N}^f(t+dt)=k| \mathcal{N}^f(t)=r \rr = dt\, \int _0 ^{\infty} \Pr \ll \mathcal{N}(s)=k|\mathcal{N}(0)=r \rr \nu (ds) 
\end{align}
\end{te}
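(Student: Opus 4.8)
The plan is to reduce the infinitesimal transition formula \eqref{rprog} by two successive applications of the identity \eqref{Vandermonde}. Writing the final state as $k>r$, formula \eqref{rprog} reads
\[
\Pr \ll \mathcal{N}^f(t+dt)=k | \mathcal{N}^f(t)=r \rr = \prod_{j=r}^{k-1}\lambda_j \sum_{m=r}^{k} \frac{1-dt\, f(\lambda_m)}{\prod_{l=r,\, l\neq m}^{k}(\lambda_l-\lambda_m)}.
\]
First I would split the numerator $1-dt\,f(\lambda_m)$ into its two parts. The contribution of the constant $1$ is $\prod_{j=r}^{k-1}\lambda_j\cdot c_{r,k-r}$, which vanishes identically by \eqref{Vandermonde}; hence only the first-order term in $dt$ survives and
\[
\Pr \ll \mathcal{N}^f(t+dt)=k | \mathcal{N}^f(t)=r \rr = -\,dt \prod_{j=r}^{k-1}\lambda_j \sum_{m=r}^{k} \frac{f(\lambda_m)}{\prod_{l=r,\, l\neq m}^{k}(\lambda_l-\lambda_m)}.
\]

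Next I would insert the integral representation of the Laplace exponent. Since we work under $\alpha=\beta=0$, \eqref{Bernstein} gives $f(\lambda_m)=\int_0^\infty (1-e^{-\lambda_m s})\,\nu(ds)$. As the summation over $m$ is finite and each $f(\lambda_m)$ is finite, the finite sum may be exchanged with the integral against $\nu$, producing
\[
\Pr \ll \mathcal{N}^f(t+dt)=k | \mathcal{N}^f(t)=r \rr = -\,dt \prod_{j=r}^{k-1}\lambda_j \int_0^\infty \sum_{m=r}^{k}\frac{1-e^{-\lambda_m s}}{\prod_{l=r,\, l\neq m}^{k}(\lambda_l-\lambda_m)}\,\nu(ds).
\]
Now I would apply \eqref{Vandermonde} a second time, this time inside the integral: the constant $1$ again generates $c_{r,k-r}=0$, so only the exponential term remains and the overall sign flips, yielding
\[
\Pr \ll \mathcal{N}^f(t+dt)=k | \mathcal{N}^f(t)=r \rr = dt \prod_{j=r}^{k-1}\lambda_j \int_0^\infty \sum_{m=r}^{k}\frac{e^{-\lambda_m s}}{\prod_{l=r,\, l\neq m}^{k}(\lambda_l-\lambda_m)}\,\nu(ds).
\]
The last step is to recognise the inner sum together with the factor $\prod_{j=r}^{k-1}\lambda_j$ as exactly the classical $r$-progenitor state probability $\Pr \ll \mathcal{N}(s)=k|\mathcal{N}(0)=r \rr$, i.e. the un-subordinated counterpart of \eqref{23}, which delivers the claimed formula.

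The manipulation is purely algebraic, so no deep analytic difficulty is expected; the two points that genuinely require care are (i) noticing that \eqref{Vandermonde} must be used twice — once to kill the zeroth-order term in $dt$ and once more to discard the constant inside $1-e^{-\lambda_m s}$ — and (ii) checking convergence so that inserting \eqref{Bernstein} and exchanging sum and integral are legitimate. For the latter I would observe that near $s=0$ the combination in the integrand equals $\Pr \ll \mathcal{N}(s)=k|\mathcal{N}(0)=r \rr$, which is $O(s^{k-r})$ with $k-r\geq 1$ and hence integrable against $\nu$ by \eqref{misura di Levy}, while near $s=\infty$ it is bounded by $1$ and $\nu$ has finite mass on $[1,\infty)$ by the same condition; thus every integral appearing is finite and the exchange of the finite sum with the integral is justified.
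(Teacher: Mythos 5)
Your proposal follows essentially the same route as the paper's own proof: starting from \eqref{rprog}, applying Lemma \eqref{Vandermonde} once to kill the zeroth-order term in $dt$, inserting the representation \eqref{Bernstein} of $f$, applying \eqref{Vandermonde} a second time to discard the constant inside $1-e^{-\lambda_m s}$, and invoking \eqref{misura di Levy} for convergence. Your convergence check is in fact slightly more detailed than the paper's (which only notes the integrand is $\mathcal{O}(s)$ as $s \to 0$ via the Vandermonde cancellation, whereas you give the sharper probabilistic bound $\mathcal{O}(s^{k-r})$ and also handle $s \to \infty$ explicitly), but the argument is the same.
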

\begin{proof}
By  repeatedly using both \eqref{Vandermonde} and the representation \eqref{Bernstein} of the Bern\v{s}tein functions $f$, we have that
\begin{align}
\Pr \ll \,  \mathcal{N}^f(t+dt)=k | \mathcal{N}^f(t)=r \rr \, &= \, \prod_{j=r}^{r+k-1} \lambda_ j\sum_{m=r}^{r+k} \frac{1-dt f(\lambda _m )}{ \prod_{l=r ,  l \neq m}^{r+k} (\lambda_l-\lambda_m )} \notag \\
&= \, -dt\prod_{j=r}^{r+k-1} \lambda_ j\sum_{m=r}^{r+k} \frac{f(\lambda _m )}{ \prod_{l=r ,  l \neq m}^{r+k} (\lambda_l-\lambda_m )}\notag \\
& =- dt \int _0^\infty \prod_{j=r}^{r+k-1} \lambda _ j\sum_{m=r}^{r+k} \frac{1-e^{-\lambda_m s }}{ \prod_{l=r ,  l \neq m}^{r+k} (\lambda_l-\lambda_m )} \nu (ds) \notag \\
 &= dt \int _0^\infty \prod_{j=r}^{r+k-1} \lambda _ j\sum_{m=r}^{r+k} \frac{e^{-\lambda_m s }}{ \prod_{l=r ,  l \neq m}^{r+k} (\lambda_l-\lambda_m )} \nu (ds) \label{ccc}.
\end{align}
In light of \eqref{Vandermonde}, the integrand in \eqref{ccc} is  $ \mathcal{O}( s)$ for $s \to 0$. Reminding \eqref{misura di Levy}, this ensures the convergence of \eqref{ccc}, and the proof is thus complete.
\end{proof}
\begin{os}
For the sake of completeness, we observe that in the case $k=0$ we have
\begin{align}
\Pr \ll \,  \mathcal{N}^f(t+dt)=r | \mathcal{N}^f(t)=r \rr \, = \, & 1-dt f(\lambda _r) \notag \\
= \, & 1-dt \int _0^\infty (1-e^{-\lambda _r s}) \nu(ds)\\
 = \, & 1-dt \int _0^\infty (1-\Pr \ll \mathcal{N}(s)=r | \mathcal{N}(0)=r \rr) \nu(ds).
\end{align}
\end{os}
\begin{os}
The subordinated non-linear birth process performs jumps of arbitrary height as the subordinated Poisson process (see, for example, \citet{orstoa}). Thus, in view of  markovianity, we can write the governing equations for the state probabilities $p_k^f(t)= \Pr \ll \mathcal{N}^f(t)=k | \mathcal{N}^f(0) =1 \rr$. For $k>1$ we have that
\begin{align}
&\frac{d}{dt} p_k^f(t)\, = \, -f(\lambda_k) p_k^f(t)+ \sum_{r=1}^{k-1}  p_r^f(t)   \int _0^\infty \prod_{j=r}^{k-1} \lambda_j \sum_{m=r}^{k} \frac{e^{-\lambda_m s }}{ \prod_{l=r ,  l \neq m}^{k} (\lambda_l-\lambda_m )} \nu (ds), 
\end{align}
while for $k=1$
\begin{align}
&\frac{d}{dt} p_1^f(t) \, = \, - f(\lambda_1)  p_1^f(t).
\end{align} 
\end{os}
\begin{os}
The process $\mathcal{N}(H^f(t))$ presents positive and integer-valued jumps occurring at random times $T_1, T_2, \cdots T_n $. The inter-arrival times $Y_1, Y_2, \cdots Y_n $ are defined as
\begin{align}
Y_k= T_k-T_{k-1}.
\end{align}
It is easy to prove that 
\begin{align}
\Pr \ll Y_k >t |\mathcal{N}^f(T_{k-1})=r \rr= e^{-f(\lambda_r)t}.
\end{align}
This can be justified by considering that in the time intervals $[T_{k-1}, T_{k-1}+t]$, no new offspring appears in the population and thus, by \eqref{rprog}, we have
\begin{align}
\Pr \ll Y_k >t |\mathcal{N}^f(T_{k-1})=r \rr= \Pr \ll \mathcal{N}^f(t+T_{k-1})=r|\mathcal{N}^f(T_{k-1})=r)\rr= e^{-f(\lambda_r)t}.
\end{align}
\end{os}

\subsection{Condition of explosion for the subordinated non-linear birth process}
We note that the explosion of the process $\mathcal{N}^f(t)$, $t>0$, in a finite time is avoided if and only if 
\begin{align}
T_{\infty}= Y_1+Y_2 \cdots Y_{\infty}= \infty
\end{align}
where $Y_j$ , $j \geq 1$, are the intertimes between successive jumps (see \cite{grimmet}, p. 252).
For the non-linear classical process we have that
\begin{align}
\mathbb{E} e^{-T_{\infty}}\, = \, & \mathbb{E}e^{- \sum _{j=1}^{\infty}Y_j}= \lim_{n \to \infty} \prod_{j=1}^{n} \mathbb{E} e^{-Y_j}= \lim_{n \to \infty} \prod _{j=1}^n \frac{\lambda _j}{1+\lambda _j} \notag \\
 = \, & \prod _{j=1}^{\infty} \frac{1}{1+\frac{1}{\lambda _j}}= \frac{1}{1+ \sum _{j=1}^{\infty}\frac{1}{\lambda _j}+ \cdots}.
\end{align}
 So, if $\sum_{j=1}^\infty \frac{1}{\lambda_j}= \infty$ we have $e^{-T_{\infty}}=0$ a.s., that is   $T_{\infty}=\infty$.
Therefore, for the subordinated non-linear birth process we have that
\begin{align}
\Pr \ll \mathcal{N}^f(t) < \infty \rr \, = \, & \int _0 ^{\infty} \sum _{k=1}^{\infty} \Pr \ll \mathcal{N}(s)=k \rr \Pr\ll {H^f(t) \in ds}\rr \notag \\
= \, & \int_0^{\infty} \Pr\ll {H^f(t) \in ds}\rr =1, \qquad \forall t>0.
\end{align}
Instead, if $\sum _{j=1}^{\infty} \frac{1}{\lambda _j} <\infty$, we get $\sum _{k=1}^{\infty}  \Pr \ll \mathcal{N}(s)=k \rr < \infty $, and this implies that $\Pr \ll \mathcal{N}^f(t)<\infty \rr <1$.

We can now consider the case of killed subordinators $\mathcal{H}^g(t)$, defined as
\begin{align}
\mathcal{H}^g(t)= \begin{cases}
H^f(t), &\qquad t< T,\\
\infty, &\qquad t \geq T,
\end{cases}
\end{align}
where $T\sim Exp(\alpha)$ and $H^f(t)$ is an ordinary subordinator related to the function $f(x)= \int _0^{\infty}(1-e^{-sx}) \nu(ds)$. It is well-known that $\mathcal{H}^g(t)$ is related to a Bern\v{s}tein function
\begin{align}
g(x)= \alpha+f(x).
\end{align}
In this case, even if $\sum_{j=1}^\infty \frac{1}{\lambda_j}= \infty$ , the probability of explosion for $\mathcal{N}^f(t)$ is positive and equal to 
\begin{align}
\Pr \ll \mathcal{N}^f(t)= \infty \rr = 1- e^{-t \alpha}.
\end{align}
This can be proven by observing that
\begin{align}
\Pr \ll \mathcal{N}^f(t) < \infty \rr \, = \, & \int _0 ^{\infty} \sum _{k=1}^{\infty} \Pr \ll \mathcal{N}(s)=k \rr \Pr\ll {H^f(t) \in ds}\rr \notag \\
= \, & \int _0^{\infty} \Pr\ll {H^f(t) \in ds}\rr = \int _0^{\infty}  e^{- \mu s} \Pr\ll {H^f(t) \in ds}\rr\bigg|_{\mu =0} \notag \\
 = \, & e^{- \alpha t- f(\mu) t}\bigg|_{\mu =0}= e^{-\alpha t}.
\end{align}
If, instead, $\sum_{j=1}^\infty \frac{1}{\lambda_j}< \infty$, we have $\sum _{k=1}^{\infty} \Pr \ll \mathcal{N}(s)=k \rr <1$ and, a fortiori, $\Pr \ll \mathcal{N}^f(t)<\infty \rr < e ^{-\alpha t}$.

\subsection{Subordinated linear birth process}
The subordinated Yule-Furry  process $N^f(t)$ with one initial progenitor possesses the following distribution
\begin{align}
p_k^f(t) \, = \, & \int _0 ^ {\infty} e^{- \lambda s } (1-e^{- \lambda s })^{k-1} \Pr \lbrace  H^ {f}(t) \in ds  \rbrace \notag \\
= \, & \int _0 ^ {\infty} e^{- \lambda s }\sum _{J=0}^{k-1} \binom{k-1}{j}(-1)^j e^{- \lambda sj }\Pr \lbrace  H^ {f}(t) \in ds  \rbrace\notag \\
= \, & \sum _{j=0}^{k-1}\binom{k-1}{j} (-1)^j \int _0 ^{\infty} e^{- s(\lambda + \lambda j) } \Pr \lbrace  H^ {f}(t) \in ds  \rbrace \notag \\
= \, & \sum _{j=0}^{k-1}\binom{k-1}{j} (-1)^j e^{-t\, f(\lambda (j+1))}.
\end{align}
Of course, this is obtainable from the distribution $\mathcal{N}^f(t)$ by assuming that $\lambda _j=\lambda j$
We now compute the factorial moments of the subordinated linear birth process.
The probability generating function is 
\begin{align}
G^f(u,t)= \sum_{k=1}^\infty u^k \int_0^\infty e^{-\lambda s} (1-e^{-\lambda s})^{k-1} \Pr (H^f(t) \in ds).
\end{align}
The $r$-th order factorial moments are
\begin{align}
& \frac{\partial^r}{\partial u^r} G^f(u,t)\bigg|_{u=1} \notag \\
= \, & \sum_{k=r}^\infty k(k-1)\cdots(k-r+1) \int_0^\infty e^{-\lambda s} (1-e^{-\lambda s})^{k-1} \Pr \ll H^f(t) \in ds\rr \notag \\
= \, & \sum_{k=r}^\infty k(k-1)\cdots(k-r+1) \int_0^\infty e^{-\lambda s} (1-e^{-\lambda s})^{k-r} (1-e^{-\lambda s})^{r-1}\Pr \ll H^f(t) \in ds \rr
\end{align}
and since 
\begin{align}
\sum_{k=r}^\infty k(k-1)...(k-r+1)(1-p)^{k-r}
= (-1)^r\frac{d^r}{dp^r} \sum_{k=0}^\infty (1-p)^k
= (-1)^r\frac{d^r}{dp^r} \frac{1}{p}= \frac{r!}{p^{r+1}}
\end{align}
we have that
\begin{align}
\frac{\partial^r}{\partial u^r} G(u,t)\bigg|_{u=1} \, = \, & r!\int_0^\infty e^{\lambda r s }(1-e^{-\lambda s})^{r-1} \Pr \ll H^f(t) \in ds \rr \notag  \\
= \, & r! \sum_{m=0} ^{r-1} \begin{pmatrix}
r-1 \\ m
\end{pmatrix}(-1)^m \int_0^\infty e^{- \lambda s(m-r)} \Pr \ll H^f(t) \in ds\rr \\
 = \, & r! \sum_{m=0} ^{r-1} \begin{pmatrix}
r-1 \\ m
\end{pmatrix}(-1)^m  e^{-t f(\lambda (m-r))}.
\end{align}
By $f(-x)$, $x>0$ we mean the extended Bern\v{s}tein function, having representation
\begin{align} 
f(-x)=\int_0^{\infty} (1-e^{sx}) \nu(ds), \qquad x>0,
\label{extended bernstein}
\end{align}
provided that the integral in \eqref{extended bernstein} is convergent.
In particular, we infer that
\begin{align}
\mathbb{E}(\mathcal{N}^f(t))=e^{-tf(-\lambda)}
\end{align}
and
\begin{align}
\textrm{Var} ( \mathcal{N}^f(t) )= 2e^{-tf(-2\lambda)}- e^{-tf(-\lambda)}-e^{-2tf(-\lambda)}.
\end{align}
For  a stable subordinator, that is with L\'evy measure  $\nu(ds)= \frac{\alpha s^{-\alpha -1}}{\Gamma(1-\alpha)}ds$ , $\alpha \in (0,1)$,  all the factorial moments are infinite.
Instead, for a tempered stable subordinator, where $\nu(ds)= \frac{\alpha e^{-\theta s} s^{-\alpha -1}}{\Gamma(1-\alpha)}ds$, $\alpha \in (0,1)$ and $\theta>0$,  only the factorial moments of order $r$ such that $r< \frac{\theta}{\lambda}$ are finite. 
If we then consider the Gamma subordinator, with $\nu(ds)= \frac{e^{-\alpha s }}{s}ds$, only the factorial moments of order $r$ such that $r< \frac{\alpha}{\lambda}$ are finite.

\subsection{Fractional subordinated non-linear birth process}
The fractional non-linear birth process has state probabilities $ p_k^{\nu}(t)$ solving the fractional differential equation
\begin{align}
\frac{d^{\nu}p_k^{\nu}(t)}{dt^{\nu}}= -\lambda _k p_k^{\nu}(t) +\lambda _{k-1}p_{k-1}^{\nu}(t) \qquad \nu \in (0,1), k \geq 1
\end{align}
with initial condition
\begin{align}
p_k^{\nu}(0)= \begin{cases} 1, \qquad &k=1, \\ 0, & k>1. \end{cases}
\end{align}
The state probabilities read (see Orsingher and Polito \cite{orspolber1})
\begin{align} 
   p_k^{\nu}(t)= \Pr \ll \mathcal{N}^{\nu}(t)=k | \mathcal{N}^{\nu}(0)=1 \rr= \prod_{j=1}^{k-1} \lambda_j \sum_{m=1}^{k} \frac{E_{\nu,1}(-\lambda_m t^{\nu} )}{ \prod_{l=1 ,  l \neq m}^{k} (\lambda_l-\lambda_m )}  \qquad \nu \in (0,1),
\end{align}
where
\begin{align}
E_{\nu,1}(-\eta t^{\nu})= \frac{\sin (\nu \pi) }{\pi} \int _0 ^{\infty}  \frac{r^{\nu -1 } e^{-r \eta ^{\frac{1}{\nu}}t}}{r^{2 \nu}+2r^{\nu}\cos(\nu \pi)+1}dr
\end{align}
is the Mittag-Leffler function (see formula (7.3) in \citet{saxena}).
So, the subordinated non-linear fractional birth process has distribution 
\begin{align}
& \Pr \ll \mathcal{N}^{\nu}(H^f(t))=k | \mathcal{N}^{\nu}(0)=1 \rr \notag  \\
= \, & \prod_{j=1}^{k-1} \lambda_j \sum_{m=1}^{k} \frac{1}{ \prod_{l=1 ,  l \neq m}^{k} (\lambda_l-\lambda_m )}\frac{\sin (\nu \pi) }{\pi} \int _0 ^{\infty}  \frac{r^{\nu -1 } e^{-tf(r \lambda_m ^{\frac{1}{\nu}})}}{r^{2 \nu}+2r^{\nu}\cos(\nu \pi)+1}dr.
\end{align}

\section{Subordinated  death processes}
We now consider the process $M^f(t)= M(H^f(t))$, where $M$ is a linear death process with $n_0$ progenitors. The state probabilities read
\begin{align}
&\Pr \ll  M^f(t)=k | M^f(0) = n_0 \rr  =  \int _0^{\infty} \binom{n_0}{k} e^{-\mu ks}(1-e^{-\mu s}) ^{n_0-k} \Pr \ll  H^f(t) \in ds \rr  \notag \\ 
 = \, & \begin{pmatrix}
n_0 \\ k
\end{pmatrix} \sum _ {j=0} ^ {n_0-k}\begin{pmatrix}
n_0-k \\ j
\end{pmatrix}(-1)^j \int _0 ^  {\infty}e^{- (\mu k + \mu j)s }\Pr \ll  H^f(t) \in ds \rr \notag \\
= \, &  \begin{pmatrix}
n_0 \\ k
\end{pmatrix} \sum _ {j=0} ^ {n_0-k}\begin{pmatrix}
n_0-k \\ j
\end{pmatrix}(-1)^j e^ {-tf(\mu k + \mu j)}.
\end{align}
In particular, the extinction probability is
\begin{align}
\Pr \ll  M^f(t)=0 | M^f(0) = n_0 \rr  \,= & \sum _ {j=0} ^ {n_0}\begin{pmatrix}
n_0 \\ j
\end{pmatrix}(-1)^j e^ {-tf(\mu j)}\notag \\
= \, & 1+ \sum _ {j=1} ^ {n_0}\begin{pmatrix}
n_0 \\ j
\end{pmatrix}(-1)^j e^ {-tf(\mu j)}
\end{align}
and converges to $1$ exponentially fast with rate $f(\mu)$.
\begin{os}
We observe that the extinction probability is a decreasing function of $n_0$ for any choice of the subordinator $H^f(t)$. This can be shown by observing that
\begin{align}
&\Pr \ll M^f(t)=0 |M^f(0)=n_0 \rr -\Pr \ll M^f(t)=0 |M^f(0)=n_0-1 \rr \notag  \\
= \, & \sum _ {j=1} ^ {n_0}\begin{pmatrix}
n_0 \\ j
\end{pmatrix}(-1)^j e^ {-tf(\mu j)} -\sum _ {j=1} ^ {n_0-1}\begin{pmatrix}
n_0-1 \\ j
\end{pmatrix}(-1)^j e^ {-tf(\mu j)} \notag \\
= \, & \sum _ {j=1} ^ {n_0-1}\begin{pmatrix}
n_0-1 \\ j-1
\end{pmatrix}(-1)^j e^ {-tf(\mu j)}+(-1)^{n_0}e^{-tf(\mu n_0)}\notag \\
= \, &  \sum _ {j=1} ^ {n_0}\begin{pmatrix}
n_0-1 \\ j-1
\end{pmatrix}(-1)^j e^ {-tf(\mu j)} \notag \\
= \, & -\sum _ {j=0} ^ {n_0-1}\begin{pmatrix}
n_0-1 \\ j
\end{pmatrix}(-1)^j e^ {-tf(\mu (j+1)} \notag \\
= \, &-\int_0 ^{\infty} \sum _ {j=0} ^ {n_0-1}\begin{pmatrix}
n_0-1 \\ j
\end{pmatrix}(-1)^j e^ {-s\mu (j+1)} \Pr \ll H^f(t) \in ds \rr \notag \\
= \, &-\int_0^\infty e^{-\mu s}(1-e^{-\mu s})^{n_0-1}\Pr \ll H^f(t) \in ds \rr 
<  0.
\end{align}
This permits us also to establish the following upper bound which is valid for all values of $n_0$.
\begin{align}
\Pr \ll M^f(t)=0 |M^f(0)=n_0 \rr < \Pr \ll M^f(t)=0 |M^f(0)=1 \rr = 1-e^{-tf(\mu)}.
\end{align}
We also infer that
\begin{align*}
& \Pr \ll M^f(t)= k|M^f(0)=n_0 \rr = \\ & \Pr \ll M^f(t)=k|M^f(0)=n_0 -1 \rr - \frac{1}{n_0} \Pr \ll M^f(t)=1|M^f(0)=n_0 \rr  \qquad \forall k< n_0
\end{align*}
\end{os}
\begin{os}
The probability generating function of the subordinated linear death process is 
\begin{align}
G(u,t)= \int_0^\infty (ue^{- \mu s}+1-e^{- \mu s})^{n_0} \Pr \ll H^f(t)  \in ds \rr.
\end{align}
We now compute the factorial moments of order $r$ for the process $M^f(t)$:
\begin{align}
&\mathbb{E} \bigl ( M^f(t) (M^f(t)-1)(M^f(t)-2) \cdots (M^f(t)-r+1) \bigr ) \notag  \\
= \, &\int_0^\infty \frac{\partial^r}{\partial u^r} (ue^{- \mu s}+1-e^{- \mu s})^{n_0}|_{u=1} \Pr \ll H^f(t)  \in ds \rr \notag \\
  = \, & n_0(n_0-1)(n_0-2)...(n_0-r+1) \int_0^\infty e^ {-\mu r s } \Pr \ll H^f(t)  \in ds \rr \notag  \\
  = \, & n_0(n_0-1)(n_0-2)...(n_0-r+1) e^{-tf(\mu r )} \notag \\
= \, & r! \binom{n_0}{r}  e^{-tf(\mu r )} \qquad \qquad \textrm{for } r \leq n_0.
\end{align}
In particular, we extract the expressions 
\begin{align}
\mathbb{E} \, M^f(t)= n_0 e^ {-t \, f(\mu)} 
\end{align}
and 
\begin{align}
\textrm{Var} \,M^f(t)= n_0 e^{-tf(\mu)}-n_0 e^{-tf(2 \mu)} + n_0^2 e^{-t f(2 \mu)} -n_0 ^2 e^{-2t f(\mu)}.
\end{align}
The variance can be also be obtained as
\begin{align}
\textrm{Var} \, M^f(t)= \, & \mathbb{E} \ll  \textrm{Var} \, (M(H^f(t))|H^f(t)) \rr + \textrm{Var} \,  \ll \mathbb{E}(M(H^f(t))|H^f(t)) \rr \notag \\
= \, & \mathbb{E}\bigl ( n_0 e^{-\mu H^f(t)}(1-e^{-\mu H^f(t)})   \bigr )+ \textrm{Var}\, ( n_0 e ^{-\mu H^f(t)})\notag \\
= \, & n_0 e^{-tf(\mu)}-n_0 e^{-tf(2 \mu)} + n_0^2 e^{-t f(2 \mu)} -n_0 ^2 e^{-2t f(\mu)}.
\end{align} 
\end{os}
\begin{os}
The transition probabilities
\begin{align}
\Pr \ll  M^f(t_0+t)=k|M^f(t_0)=r \rr   =   \binom{r}{k} \sum_{j=0}^{r-k}\binom{r-k}{j} (-1)^j e^{-tf(\mu k + \mu j)}
\end{align}
permit us to write, for a small time interval $[t,t+dt)$,
\begin{align}
& \Pr \ll  M^f(t_0+dt)=k |   M^f(t_0)=r \rr \notag \\
= \, & \binom{r}{k} \sum_{j=0}^{r-k}\binom{r-k}{j}(-1)^j (1-dt \,f(\mu k + \mu j) ) \notag \\
= \, & -dt \binom{r}{k} \sum_{j=0}^{r-k}\binom{r-k}{j}(-1)^j  \, f(\mu k + \mu j) \notag \\
= \, & -dt \binom{r}{k} \sum_{j=0}^{r-k}\binom{r-k}{j}(-1)^j  \, \int _0 ^\infty (1-e^{-(\mu k + \mu j)s}) \nu (ds) \notag \\
 = \, & dt \binom{r}{k} \int _0 ^\infty \sum _ {j=0} ^ {r-k}\binom{r-k}{j}(-1)^j  \,  e^{- \mu js} e^{-\mu k s} \nu (ds) \notag \\
= \, & dt \int _0 ^\infty \binom{r}{k} (1-e^{- \mu s})^{r-k} e^{-\mu k s} \nu (ds)\notag \\
= \, & dt \int_0^\infty \Pr \ll M(s) = k | M(0) = r \rr \nu(ds)
\label{trans prob death process} \qquad 0 \leq k <r \leq n_0
\end{align}
It follows that the subordinated death process decreases with downwards jumps of arbitrary size. Formula \eqref{trans prob death process} is a special case of \eqref{14b} for the linear death process.
\end{os}
\begin{os}
If $M^f(t_0)=r$, the probability that the number of individuals does not change during a time interval of length $t$ is
\begin{align}
\Pr \ll  M^f(t_0+t)=r|   M^f(t_0)=r \rr  \, = e^{-tf(r \mu)}.
\label{312}
\end{align}
As a consequence, the random time between two successive jumps has exponential distribution with rate $f(\mu r)$, i.e.
\begin{align}
T_r \sim \textrm{Exp}(f(\mu r)).
\end{align}
From \eqref{312} we have also that
\begin{align}
\Pr \ll M^f(t+dt) = r | M^f(t)  = r \rr \, = \, 1-dtf(\mu r).
\end{align}
\end{os}
\begin{os}
In view of \eqref{trans prob death process} we  can write the governing equations for the transition probabilities $p_k^f(t)= \Pr \ll M^f(t)=k|M^f(0)=n_0 \rr$, for $0 \leq k \leq n_0$
\begin{align}
\frac{d}{dt} p_k^f(t)= -p_k^f(t)f(\mu k)+ \sum_{j=k+1}^{n_0} p_j^f(t)  \int _0 ^\infty \binom{j}{k} (1-e^{- \mu s})^{j-k} e^{-\mu k s} \nu (ds) .
\end{align}
\end{os}
\subsection{The subordinated sublinear death process}
In the sublinear death process we have that, for $0 \leq k \leq n_0$,
\begin{align}
\Pr \ll \mathbb{M}(t+dt)=k-1| \mathbb{M}(t)=k, \mathbb{M}(0)=n_{0}\rr= \mu (n_0-k+1)dt+o(dt)  
\end{align}
so that the probability that a particle disappears in $[t,t+dt)$ is proportional to the number of deaths occurred in $[0,t)$. It is well-known that
\begin{align}
\Pr \ll \mathbb{M}(t)=k| \mathbb{M}(0)=n_0\rr \, = \, \begin{cases} e^{-\mu t}(1-e^{-\mu t })^{n_0-k}, \qquad &k=1,2, \dots, n_0, \\
(1-e^{-\mu t})^{n_0} , & k=0.
\end{cases}
\end{align}
So, the probability law of the subordinated process immediately follows
\begin{align}
 &\Pr \ll \mathbb{M}^f(t)=k| \mathbb{M}^f(0)=n_0\rr \notag \\
  = \, & \begin{cases} \sum_{j=0}^{n_0-k} \begin{pmatrix}
n_0-k \\ j
\end{pmatrix} (-1)^j e^{- tf(\mu (j+1))},   \qquad &k=0,1, \dots, n_0, \\
\sum _{k=0}^{n_0} \begin{pmatrix} 
n_0 \\k
\end{pmatrix} (-1)^k e^{-t f( \mu k) },  &k=0
\end{cases}
\end{align}
The extinction probability is a decreasing function of $n_0$ as in the sublinear death process. Furthermore we observe that the extinction probabilities for the subordinated linear and sublinear death process coincide.

\section{Subordinated linear birth-death processes}
In this section we consider the linear birth and death process $L(t)$ with one progenitor at the time $H^f(t)$.
We recall that, for $k\geq 1$ (see \citet{bailey}, page 90),
\begin{align}
\Pr \ll L(t)=k | L(0) =1 \rr = \begin{cases}
\frac{(\lambda-\mu)^2e^{-(\lambda-\mu)t}(\lambda(1-e^{-(\lambda-\mu)t}))^{k-1}}{(\lambda-\mu e^{-(\lambda-\mu)t})^{k+1}}, \qquad & \lambda > \mu, \\
\frac{(\mu-\lambda)^2 e^{-(\mu-\lambda)t}(\lambda(1-e^{-(\mu-\lambda)t}))^{k-1}}{(\mu-\lambda  e^{-(\mu-\lambda)t})^{k+1}},  & \lambda < \mu, \\
\frac{(\lambda t)^{k-1}}{(1+\lambda t)^{k+1}},  & \lambda = \mu.
\end{cases} 
\end{align}
while the extinction probabilities have the form
\begin{align}
\Pr \ll L(t)=0 |L(0) = 1 \rr = \begin{cases}
\frac{\mu-\mu e^{-t(\lambda-\mu)}}{\lambda-\mu e ^{-t(\lambda- \mu)}}, \qquad & \lambda > \mu,\\
\frac{\mu-\mu e^{-t(\mu- \lambda)}}{\lambda-\mu e ^{-t(\mu- \lambda)}}, & \mu > \lambda, \\
\frac{\lambda t}{1+\lambda t},  & \lambda= \mu.
\end{cases}
\end{align}
We now study the subordinated process $L^f(t)=L(H^f(t))$. When $\lambda \neq \mu$, after  a series expansion we easily obtain that
\begin{align}
&\Pr \ll L^f(t)=k |L^f(0) =1 \rr \notag \\
= \, & 
\begin{cases}
 \l \frac{\lambda - \mu }{\lambda} \r^2 \sum_{l=0}^\infty \binom{l+k}{l} \l \frac{\mu }{\lambda} \r^l \sum_{r=0}^{k-1} (-1)^r \binom{k-1}{r} e^{-t f \l \l \lambda - \mu \r \l l+r+1 \r \r} , \qquad &\lambda >\mu, \\
 \l \frac{\mu -\lambda}{\mu} \r^2 \l \frac{\lambda}{\mu} \r^{k-1} \sum_{l=0}^\infty \binom{l+k}{l} \l \frac{\lambda }{\mu} \r^l \sum_{r=0}^{k-1} (-1)^r \binom{k-1}{r} e^{-t f\l \l \mu - \lambda \r \l l+r+1 \r \r}, & \lambda <\mu,
\end{cases}
\end{align}
provided that $k \geq 1$. Moreover, the extinction probabilities have the following form 
\begin{align}
\Pr \ll L^f(t)=0 \rr \, = \,
\begin{cases}
\frac{\mu - \lambda}{\lambda} \l \sum_{m=1}^\infty \l \frac{\mu}{\lambda} \r^m e^{-tf\l (\lambda - \mu)m \r} \r+\frac{\mu}{\lambda}, \qquad &\lambda>\mu, \\
   1- \l \frac{\mu -\lambda}{\lambda}  \r \sum_{m=1}^\infty \l \frac{\lambda}{\mu} \r^m e^{-tf \l \l \mu - \lambda \r m \r},  &\lambda <\mu.
\end{cases}
\end{align}
Similarly to the classical process, we have
\begin{align}
\lim_{t \to \infty} \Pr \ll L^f(t)=0 \rr = \begin{cases} \frac{\mu}{\lambda}, \qquad &\lambda > \mu, \\
1, & \lambda < \mu.
\end{cases}
\end{align}
\subsection{Processes with equal birth and death rates}
We concentrate ourselves on the case $\lambda= \mu$, which leads to some interesting results. The extinction probability reads
 \begin{align}
\Pr \ll L^f(t) = 0 | L^f(0) =1 \rr \, = \, & \int_0^\infty \frac{\lambda s}{1+\lambda s} \Pr \ll H^f(t) \in ds \rr \notag \\
= \, &1- \int_0^\infty \frac{1}{1+\lambda s} \Pr \ll H^f(t) \in ds \rr \notag \\
= \, & 1-\int_0^\infty \Pr \ll H^f(t) \in ds \rr \int_0^\infty dw \, e^{-w\lambda s} \, e^{-w} \notag \\
= \, &1-\int_0^\infty dw \, e^{-w} e^{-tf(\lambda w)}
\label{exctinction probability}.
\end{align}
We note that
\begin{align}
\lim_{t \to \infty} \Pr \ll L^f(t)=0 |L^f(0) = 1 \rr = 1
\end{align}
as in the classical case. From \eqref{exctinction probability} we infer that the distribution of the extinction time $T_0^f = \inf \ll t \geq 0 : L^f(t) = 0 \rr$, has the following form
\begin{align}
\Pr \ll T_0^f \in dt \rr / dt \, = \, \int_0^\infty e^{-w} f(\lambda w) e^{-tf(\lambda w)} dw.
\end{align}
We now observe that all the state probabilities of the process $L(t)$ depend on the extinction probability (see \cite{orspolber1}) 
\begin{align}
\Pr \ll L(t) = k |L(0) =1 \rr \, = \, & \frac{(\lambda t)^{k-1}}{\l 1+\lambda t \r^{k+1}} \qquad \qquad \qquad  \qquad \qquad \qquad \qquad k\geq 1 \notag \\
= \, & \frac{(-1)^{k-1} \lambda^{k-1}}{k!} \frac{d^k}{d\lambda^k} \l \frac{\lambda}{1+\lambda t} \r \notag \\
= \, & \frac{(-1)^{k-1} \lambda^{k-1}}{k!} \frac{d^k}{d\lambda^k} \l \lambda \l 1-\Pr \ll L(t)=0\rr \r \r.
\label{state prob for L(t)}
\end{align}
Hence, the state probabilities of $L^f(t)$ can be written, for $k \geq 1$, as
\begin{align}
&\Pr \ll L^f(t) = k |L^f(0) =1 \rr \, \notag \\
= \,  &\frac{(-1)^{k-1} \lambda^{k-1}}{k!} \frac{d^k}{d\lambda^k}\left[ \lambda \int_0^\infty    \l 1-\Pr \ll L(s)=0 \rr \r  \Pr \ll H^f(t) \in ds \rr \right] \notag \\
= \, & \frac{(-1)^{k-1} \lambda^{k-1}}{k!} \frac{d^k}{d\lambda^k} \left[ \lambda  \l 1- \Pr \ll L^f(t) = 0 \rr \r \right] \notag \\
=\, &\frac{(-1)^{k-1} \lambda^{k-1}}{k!} \frac{d^k}{d\lambda^k} \left[ \lambda \int_0^\infty dw \, e^{-w} e^{-tf(\lambda w)} \right].
\label{state probabilities L^f(t)}
\end{align}

\subsection{Transition probabilities}
To compute the transition probabilities of $L^f(t)$, we recall that the linear birth-death process with $r$ progenitors has the following probability law (see \cite{bailey}, page 94, formula 8.47):
\begin{align}
\Pr \ll L(t)= n| L(0)=r \rr = \sum _{j=0}^{min(r,n)}   \binom{r}{j} \binom{r+n-j-1}{r-1} \alpha ^{r-j} \beta ^{n-j}(1-\alpha-\beta)^{j}, 
\label{transition prob birth death}
\end{align}
where $n \geq 0$ and
\begin{align}
\alpha \,= \, \frac{\mu(e^{(\lambda-\mu)t}-1)}{\lambda e^{(\lambda-\mu)t}-\mu } \qquad \textrm{and} \qquad \beta \,= \, \frac{\lambda(e^{(\lambda-\mu)t}-1)}{\lambda e^{(\lambda-\mu)t}-\mu }.
\end{align}
In the case $\lambda= \mu$ we have
\begin{align}
\lim_{\mu \to \lambda} \alpha \, = \, \lim_{\mu \to \lambda} \beta = \frac{\lambda t}{1+\lambda t}
\end{align} 
so that
\begin{align}
&\Pr \ll L(t)=n|L(0)=r \rr \notag \\
 = \, & \sum _{j=0}^{min(r,n)}   \binom{r}{j} \binom{r+n-j-1}{r-1} \biggl ( \frac{\lambda t}{1+\lambda t}\biggr )^{r+n-2j}\biggl (1-2 \frac{\lambda t}{1+ \lambda t}\biggr)^{j} \notag \\
= \, & \sum _{j=0}^{min(r,n)}  \sum_{k=0}^j \binom{r}{j} \binom{r+n-j-1}{r-1} \binom{j}{k}(-2)^k \biggl ( \frac{\lambda t}{1+\lambda t}\biggr )^{r+n-2j+k}. \label{mi serve per phillips nascita e morte 2}
\end{align}
One can check that for $r=1$ the last formula reduces to 
\begin{align}
\Pr \ll  L(t)=n |L(0) =1 \rr= \frac{(\lambda t)^{n-1}}{(1+\lambda t)^{n+1}}.
\end{align}
The transition probabilities related to the subordinated process $L^f(t)$ can be written in an elegant form, as shown in the following theorem.
\begin{te}
In the subordinated linear birth-death process $L^f(t)$, when $\lambda=\mu$, $n\geq 0$, $r\geq 1$, $n \neq r$, we have that
\begin{align}
&\Pr \ll L^f(t+t_0)=n |L^f(t_0)=r \rr \notag \\= \,  &\sum _{j=0}^{min(r,n)}  \sum_{k=0}^j \binom{r}{j} \binom{r+n-j-1}{r-1} \binom{j}{k}2^k  \frac{ (-1)^{r+n-1} \lambda ^ {r+n+k-2j}}{(r+n-2j+k-1)!} \notag \\
& \times  \frac{d^{r+n-2j+k-1}}{d \lambda ^{r+n-2j+k-1}} \biggl [ \frac{1}{\lambda}- \frac{1}{\lambda}\int_0^\infty dw \, e^{-w} e^{-tf(\lambda w)} \biggr ] \label{probabilità di transizione processo nascita morte subordinato}
\end{align}
\end{te}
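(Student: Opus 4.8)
The plan is to reduce the subordinated transition probability to the classical one through the composition formula and then to rewrite every summand of the classical law \eqref{mi serve per phillips nascita e morte 2} by means of $\lambda$-derivatives, exactly as was done for the single-progenitor case in \eqref{state prob for L(t)}. The starting point is the elementary identity, valid for every integer $p\geq 1$,
\begin{equation}
\l \frac{\lambda s}{1+\lambda s} \r^{p} = \frac{(-1)^{p-1}\lambda^{p}}{(p-1)!}\,\frac{d^{p-1}}{d\lambda^{p-1}}\,\frac{s}{1+\lambda s},
\label{key-identity}
\end{equation}
which follows by iterating $\frac{d^{m}}{d\lambda^{m}}(1+\lambda s)^{-1}=(-1)^{m}m!\,s^{m}(1+\lambda s)^{-m-1}$ and setting $m=p-1$. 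Each summand of \eqref{mi serve per phillips nascita e morte 2} carries a factor $\l \frac{\lambda s}{1+\lambda s}\r^{p}$ with $p=r+n-2j+k$, so \eqref{key-identity} can be applied termwise as soon as $p\geq 1$.

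First I would verify that the hypothesis $n\neq r$ is precisely what guarantees $p\geq 1$: since $j\leq\min(r,n)$ and $0\leq k\leq j$, one has $p=r+n-2j+k\geq r+n-2\min(r,n)=|r-n|\geq 1$, so no derivative of negative order ever occurs. Substituting \eqref{key-identity} into \eqref{mi serve per phillips nascita e morte 2} and collecting the signs, the factor $(-2)^{k}$ of the classical law combines with $(-1)^{p-1}=(-1)^{r+n-1}(-1)^{k}$ to produce $2^{k}(-1)^{r+n-1}$, together with $\lambda^{p}=\lambda^{r+n+k-2j}$ and $(p-1)!=(r+n-2j+k-1)!$; this already reconstructs the prefactor and the differential operator appearing in \eqref{probabilità di transizione processo nascita morte subordinato}, acting now on $\frac{s}{1+\lambda s}$.

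Next I would pass to the subordinated process. By the same time-homogeneity and markovianity invoked for $\mathcal N^f$ and $M^f$, the composition formula yields $\Pr\ll L^f(t+t_0)=n | L^f(t_0)=r\rr=\int_0^\infty \Pr\ll L(s)=n | L(0)=r\rr\,\Pr\ll H^f(t)\in ds\rr$. Writing the classical law in its derivative form and pulling the subordinator integral through the finite-order, $s$-free $\lambda$-derivatives, the integral only has to act on $\frac{s}{1+\lambda s}$. Using $\frac{1}{1+\lambda s}=\int_0^\infty e^{-w}e^{-\lambda s w}\,dw$ and Tonelli to exchange the $w$- and $s$-integrals, I obtain
\begin{equation}
\int_0^\infty \frac{s}{1+\lambda s}\,\Pr\ll H^f(t)\in ds\rr=\frac{1}{\lambda}-\frac{1}{\lambda}\int_0^\infty dw\,e^{-w}e^{-tf(\lambda w)},
\end{equation}
which is exactly the bracket in \eqref{probabilità di transizione processo nascita morte subordinato}; reassembling the double sum completes the argument.

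The main obstacle is the justification of differentiating under the integral sign, that is, moving the $\lambda$-derivative of order up to $p-1$ outside the integral against the law of $H^f(t)$. This is handled by dominated convergence: since $\frac{d^{m}}{d\lambda^{m}}\frac{s}{1+\lambda s}=(-1)^{m}m!\,s^{m+1}(1+\lambda s)^{-m-1}$ and $s^{m+1}(1+\lambda s)^{-m-1}\leq\lambda^{-(m+1)}$ for every $s>0$, these derivatives are bounded uniformly in $s$ for $\lambda$ in a compact set bounded away from $0$, hence dominated by a constant that is trivially integrable against the probability measure $\Pr\ll H^f(t)\in ds\rr$. The Tonelli exchange of the $w$- and $s$-integrals is immediate by positivity, so no further integrability condition is needed.
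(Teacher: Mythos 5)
Your proposal is correct and follows essentially the same route as the paper's own proof: the composition formula, the identity $\l \frac{\lambda s}{1+\lambda s}\r^{m} = \frac{(-1)^{m-1}s\,\lambda^{m}}{(m-1)!}\frac{d^{m-1}}{d\lambda^{m-1}}\frac{1}{1+\lambda s}$ applied termwise to \eqref{mi serve per phillips nascita e morte 2}, and the evaluation of $\int_0^\infty \frac{s}{1+\lambda s}\Pr\ll H^f(t)\in ds\rr$ via the exponential representation of $(1+\lambda s)^{-1}$, exactly as in \eqref{exctinction probability}. Your two additions --- checking that $n\neq r$ forces the exponent $r+n-2j+k\geq 1$, and the dominated-convergence bound $s^{m+1}(1+\lambda s)^{-m-1}\leq \lambda^{-(m+1)}$ justifying the exchange of $\lambda$-derivatives with the subordinator integral --- are sound refinements of steps the paper performs without comment.
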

\begin{proof}
By subordination we have
\begin{align}
 \Pr  \ll L^f(t)=n | L^f(0)=r \rr \, = \, & \int _0^{\infty} \Pr \ll L(s)=n |L(0)=r \rr \Pr \ll H^f(t) \in ds \rr \notag  \\ 
 = \, & \sum _{j=0}^{min(r,n)}  \sum_{k=0}^j \binom{r}{j} \binom{r+n-j-1}{r-1} \binom{j}{k}(-2)^k  \notag \\
 & \times \int_0 ^ {\infty} \Pr \ll H(t) \in ds \rr \biggl ( \frac{\lambda s}{1+\lambda s}\biggr )^{r+n-2j+k}.
\end{align}
To compute the last integral, we preliminarly observe that
\begin{align}
\frac{d^m}{d \lambda ^m} \frac{1}{1+\lambda s}= (-1)^m m!\, s^m \frac{1}{(1+\lambda s)^{m+1}}
\end{align}
and consequently
\begin{align}
\biggl ( \frac{\lambda s}{1+ \lambda s } \biggr )^m =  \frac{(-1)^{m-1} s \, \lambda ^m}{(m-1)!}\frac{d^{m-1}}{d \lambda ^{m-1}} \frac{1}{1+\lambda s}. \label{mi serve per phillips in nascita e morte 1}
\end{align}
So, we have
\begin{align}
&\Pr \ll L^f(t)=n|L^f(0)=r \rr \notag \\
 = \, & \sum _{j=0}^{min(r,n)}  \sum_{k=0}^j \binom{r}{j} \binom{r+n-j-1}{r-1} \binom{j}{k}2^k  \frac{ (-1)^{r+n-1} \lambda^{r+n-2j+k}}{(r+n-2j+k-1)!} \notag \\
& \times \frac{d^{r+n-2j+k-1}}{d \lambda ^{r+n-2j+k-1}} \int _0 ^{\infty} \frac{s}{1+\lambda s} \Pr \ll H^f(t) \in ds \rr
\end{align}
where, by using \eqref{exctinction probability}, we write
\begin{align}
 \int _0 ^{\infty} \frac{s}{1+\lambda s} \Pr \ll H^f(t) \in ds \rr &= \frac{1}{\lambda}  \int _0 ^{\infty} \frac{\lambda s}{1+\lambda s} \Pr \ll H^f(t) \in ds \rr \notag \\
 &= \frac{1}{\lambda} \biggl [1-\int_0^\infty dw \, e^{-w} e^{-tf(\lambda w)} \biggr ]
\end{align}
and the desired result immediately follows.
\end{proof}
\begin{os}
For a small time interval $dt$, the quantity in square brackets in (\ref{probabilità di transizione processo nascita morte subordinato}) can be written as
\begin{align*}
& \frac{1}{\lambda}- \frac{1}{\lambda} \int_0^{\infty} dw \, e^{-w}(1-dt f(\lambda w)) \\
& =dt \, \frac{1}{\lambda} \int_0^{\infty} dw \, e^{-w} \int_0^{\infty} \nu(ds) (1-e^{-\lambda w s})\\
& = dt \int_0^{\infty} \nu(ds) \frac{s}{1+\lambda s}
\end{align*}
Then, by using (\ref{mi serve per phillips in nascita e morte 1}) e (\ref{mi serve per phillips nascita e morte 2}), formula (\ref{probabilità di transizione processo nascita morte subordinato}) reduces to
\begin{align*}
\Pr \ll L^f(t_0+dt)=n| L^f(t_0)=k \rr = dt\int_0^{\infty} \nu(ds) \Pr \ll L(s)=n | L(0)=k \rr
\end{align*}
thus proving relation (\ref{14b}) for subordinated birth-death processes.
\end{os}
\begin{os}
If $L^f(0)=1$, from (\ref{state probabilities L^f(t)}) we have that the probability that the number of individuals does not change during a time interval of length $dt$ is
\begin{align*}
\Pr \ll L^f(dt)=1|L^f(0)=1 \rr= 1-dt\, \frac{d}{d \lambda} \bigl (\lambda\int _0^{\infty}dw \, e^{-w}f(\lambda w) \bigr )
\end{align*}
Thus the waiting time for the first jump, i.e.
\begin{align*}
T_1= inf \ll t>0: L^f(t) \neq 1 \rr ,
\end{align*}
has the following distribution
\begin{align}
\Pr \ll T_1>t \rr = e^{-t \frac{d}{d \lambda} (\lambda\int _0^{\infty}dw \, e^{-w}f(\lambda w)}).
\end{align}
For example, in the case $H^f(t)$ is a stable subordinator with index $\alpha \in (0,1)$, $T_1$ has an exponential distribution with parameter $ \lambda ^{\alpha} \Gamma (\alpha +2)$.
\end{os}

\subsection{Mean sojourn times} 
Let $V_k(t)$, $k\geq 1$ the total amount of time that the process $L(t)$ spends in the state $k$ up to time $t$, i.e.
\begin{align}
V_k(t)= \int_0 ^t I_k(L(s))\, ds,
\end{align}
where $I_k(.)$ is the indicator function of the state $k$. The mean sojourn time up to time $t$ is given by
\begin{align}
\mathbb{E}V_k(t)=\int_0 ^t \Pr \ll L(s)=k |L(0) =1 \rr ds.
\end{align}
By means of \eqref{state prob for L(t)} we have that
\begin{align}
\mathbb{E}V_k(t) \, = \, & \int_0^t \Pr \ll L(s)=k |L(0)=1 \rr ds \notag \\
= \, & \frac{(-1)^{k-1} \lambda^{k-1}}{k!} \frac{d^k}{d\lambda^k} \l \lambda \l t-\int_0^t \Pr \ll L(s)=0\rr ds \r \r \notag  \\
= \, &  \frac{(-1)^{k-1} \lambda^{k-1}}{k!} \frac{d^k}{d\lambda^k} \l \lambda \l t-\int_0^t \frac{\lambda s}{1+\lambda s}ds \r \r \notag  \\
= \, &  \frac{(-1)^{k-1} \lambda^{k-1}}{k!} \frac{d^k}{d\lambda^k}  \log (1+\lambda t) \notag \\
= \, & \frac{1}{\lambda k} \l \frac{\lambda t}{1+\lambda t} \r^k
\end{align}
and the mean asymptotic sojourn time is therefore given by
\begin{align}
\mathbb{E}V_k(\infty)=\frac{1}{\lambda k}. \label{mean classic sojourn time}
\end{align}
 In view of \eqref{state probabilities L^f(t)}, for the  sojourn time $V_k^f(t)$ of the subordinated process $L^f(t)$ we have that
\begin{align}
\mathbb{E}V_k^f(t) \, = \, &\int_0^t \Pr \ll L^f(s)=k |L^f(0) =1 \rr ds \notag \\
= \, & \frac{(-1)^{k-1} \lambda^{k-1}}{k!} \frac{d^k}{d\lambda^k} \left[ \lambda \int_0^\infty dw \, e^{-w} \frac{1}{f(\lambda w)} \l 1- e^{-tf(\lambda w)} \r \right]
\end{align}
and the mean asymptotic sojourn time is given by
\begin{align}
\mathbb{E}V_k^f(\infty)= \frac{(-1)^{k-1} \lambda^{k-1}}{k!} \frac{d^k}{d\lambda^k} \left[ \lambda \int_0^\infty dw \, e^{-w}  \frac{1}{f(\lambda w)} \right].
\end{align}
It is possible to obtain an explicit expression for $   \mathbb{E}V_k^f(\infty)$ in the case of a stable subordinator, when $f(x)=x^{\alpha}$, $\alpha \in (0,1)$, i.e.
\begin{align}
\mathbb{E}V_k^f(\infty) \, = \, & \frac{(-1)^{k-1} \lambda^{k-1}}{k!} \frac{d^k}{d\lambda^k} \left[ \lambda \int_0^\infty dw \, e^{-w}  \frac{1}{\lambda ^{\alpha}w^{\alpha}} \right] \notag \\
= \, & \frac{(-1)^{k-1} \lambda^{k-1}\Gamma (1-\alpha)}{k!}  \frac{d^k}{d \lambda ^k} \lambda ^{1-\alpha} \notag\\
= \, & \frac{(-1)^{k-1} \lambda^{k-1}\Gamma (1-\alpha)}{k!}  (1-\alpha)(-\alpha)(-\alpha-1)\cdots (-\alpha -k+1) \lambda ^{-\alpha -k+1}  \notag\\
 = \, & \frac{\Gamma(1-\alpha)\Gamma(\alpha+k)}{k!\Gamma(\alpha)\lambda^{\alpha}}\notag \\
 = \, & \frac{B(1-\alpha, k+ \alpha)}{\Gamma(\alpha) \lambda^{\alpha}}, \qquad \textrm{for } k \geq 1.
\label{tempo medio di soggiorno stabile}
\end{align}
In the case $\alpha= \frac{1}{2}$, by using the duplication formula for the Gamma function and the Stirling formula, the quantity in \eqref{tempo medio di soggiorno stabile} can be estimated,  for large values of $k$, in the following way:
\begin{align}
\mathbb{E} V_k^f(\infty)=\frac{\Gamma(\frac{1}{2}+k)}{k! \sqrt{\lambda}}  = \frac{\Gamma(\frac{1}{2})2^{1-2k}\Gamma(2k)}{k! \sqrt{\lambda}\Gamma(k)}    \simeq \frac{1}{\sqrt{\lambda k}}
\end{align}
which is somehow related to \eqref{mean classic sojourn time}. We finally note that
\begin{align}
\frac{1}{(\alpha + k) \Gamma(\alpha) \lambda^\alpha} < \mathbb{E} V_k^f(\infty) < \frac{1}{(1-\alpha)\Gamma(\alpha)\lambda^{\alpha}}, \qquad \forall k \geq 1,
\end{align}
since
\begin{align}
\frac{1}{(\alpha +k)}  < B(1-\alpha, k+\alpha)< \frac{1}{1-\alpha}.
\end{align}

\subsection{On the distribution of the sojourn times}
Let $L^f_k(t)$ be a linear birth-death process with $k$ progenitors. We now study the distribution of the sojourn time
\begin{align}
V_{k}(t) \, = \, \int_0^t I_k \l L^f_k(s)  \r  ds
\end{align}
which represents the total amount of time that the process spends in the state $k$ up to time $t$. We now define the Laplace transform
\begin{align}
r_k(\mu) \, = \, \int_0^\infty e^{-\mu t} \Pr \ll L^f_k(t) = k \rr dt.
\end{align}
The hitting time
\begin{align}
V_k^{-1}(t) \, = \, \inf \ll w > 0 : V_k(w) > t \rr
\end{align}
is such that
\begin{align}
\mathbb{E} \int_0^\infty e^{-\mu V_k^{-1}(t)} dt \, = \, & \mathbb{E} \int_0^\infty e^{-\mu t} dV_k(t) \notag \\
= \, & \mathbb{E} \int_0^\infty e^{-\mu t} I_k\l L_k^f(t) \r dt \notag \\
= \, &r_k(\mu).
\label{14}
\end{align}
By Proposition 3.17, chapter V, of \cite{getoor} we have
\begin{align}
\mathbb{E} e^{-\mu V_k^{-1}(t)} \, = \, e^{-t\frac{1}{r_k(\mu)}}.\label{sss}
\end{align}
Now we resort to the fact that
\begin{align}
\Pr \ll V_k(t) > x \rr \, = \, \Pr \ll V_k^{-1}(x) < t \rr
\end{align}
and thus we can write
\begin{align}
\Pr \ll V_k(t) \in dx \rr /dx\, = \, -\frac{\partial}{\partial x} \int_0^t \Pr \ll V_k^{-1}(x) \in dw \rr.
\label{inverso}
\end{align}
We therefore have that
\begin{align}
\frac{1}{dx}\int_0^\infty e^{-\mu t}\Pr \ll V_k(t) \in dx \rr dt \, & = -\frac{d}{dx} \int_0^{\infty} dt \, e^{-\mu t} \int_0^{t} \Pr \ll V_k^{-1}(x) \in dw\rr \notag \\
&= - \frac{d}{dx} \int_0^{\infty} dw \int _w^{\infty} dt \, e^{-\mu t} \Pr \ll V_k^{-1}(x) \in dw\rr \notag \\
&= -\frac{1}{\mu} \frac{d}{dx} \int_0^{\infty} dw\, e^{- \mu w} \Pr \ll V_k^{-1}(x) \in dw\rr \notag \\
  & =-\frac{1}{\mu} \frac{d}{d x}  e^{-x \frac{1}{r_k(\mu)}} \notag \\
&=  \frac{1}{\mu \, r_k(\mu)}e^{-x \frac{1}{r_k(\mu)}}.
\end{align}
If $r_k(0)<\infty$, from \eqref{sss} it emerges that $\Pr \ll V_k^{-1}(t) < \infty \rr<1$; so the sample paths of $V_k(t)$ become constant after a random time with positive probability. This is related to the fact that the subordinated birth and death process extinguishes with probability one  in a finite time when $\lambda = \mu$.

We finally observe that in the case $k=1$  by \eqref{state probabilities L^f(t)} we have
\begin{align}
r_1(\mu) \, = \, \int_0^{\infty} e^{-\mu t}\Pr \ll L^f(t)=k \rr dt \, = \,  \frac{d}{d\lambda}\left[ \lambda \int_0^{\infty}dw \, e^{-w}\frac{1}{\mu+f(\lambda w)}\right],
\end{align}
provided that the Fubini Theorem holds true.

\vspace{1cm}

\begin{thebibliography}{16}
\providecommand{\natexlab}[1]{#1}
\providecommand{\url}[1]{\texttt{#1}}
\expandafter\ifx\csname urlstyle\endcsname\relax
  \providecommand{\doi}[1]{doi: #1}\else
  \providecommand{\doi}{doi: \begingroup \urlstyle{rm}\Url}\fi
  
  
  
 \bibitem[Applebaum(2001)]{apple}
D. Applebaum.
\newblock L\'evy processes and stochastic calculus.
\newblock \emph{Cambridge studies in advanced mathematics}, 2001.  
  
  
 \bibitem[Bailey(1964)]{bailey}
N.T.J. Bailey.
\newblock  The elements of stochastic processes with applications to the natural sciences.
\newblock \emph{John Wiley \& Sons, Inc.}, New York, 1964.

  
  
  
\bibitem [Blumenthal and Getoor(1970)]{getoor}
R.M. Blumenthal and R.K. Getoor.
\newblock {Markov processes and potential theory}.
\newblock \emph{Pure and Applied Mathematics, Vol. 29, Academic press}, New York - London, 1968.
  
  
   \bibitem[Cahoy and Polito(2014)]{cahoy}
O.Cahoy and F. Polito.
\newblock Simulation and estimation for the fractional Yule process.
\newblock \emph{Methodol. Comput. Appl. Probab.}, 14: 383 -- 403, 2012.
  
\bibitem[Cahoy and Polito(2001)]{cahoypol}
O.Cahoy and F. Polito.
\newblock Parameter estimation for fractional birth and fractional death processes.
\newblock \emph{Stat. Comp.}, 24: 211 -- 222, 2014.
  

  
\bibitem[Garra and Polito(2011)]{garra}
R. Garra and F. Polito.
\newblock  A note on fractional linear pure birth and pure death processes in epidemic models.
\newblock \emph{Physica A}, 390(21-22): 3704--3709, 2011.
  
  
  
  
\bibitem[Grimmett and Stirzaker(2001)]{grimmet}
G. Grimmett and D. Stirzaker.
\newblock Probability and Random Processes.
\newblock \emph{Oxford University Press}, Oxford, 2001.
  
  \bibitem[Haubold et al.(2011)]{saxena}
H.J. Haubold, A.M. Mathai and R.K. Saxena.
\newblock Mittag-Leffler functions and their applications.
\newblock \emph{Journal of Applied Mathematics}, Art. ID 298628, 2011.

  
 \bibitem[Konno and  Pazsit(2014)]{konno}
H. Konno and I. Pazsit.
\newblock Composite fractional time evolutions of cumulants.
in Fractional Generalized Birth Processes.
\newblock \emph{Adv. Studies Theor. Phys.}, 8(5): 195 -- 213, 2014. 
  
  
  
  
\bibitem[Orsingher and Polito(2010)]{orspolber3}
E. Orsingher and F. Polito.
\newblock  Fractional nonlinear, linear and sublinear death process.
\newblock \emph{Journal of Statistical Physics}, 141(1): 68 -- 93, 2010.
  
  
\bibitem[Orsingher and Polito(2010)]{orspolber4}
E. Orsingher and F. Polito.
\newblock Fractional Pure birth processes.
\newblock \emph{Bernoulli}, 16(3): 858-881, 2010.
  
  
\bibitem[Orsingher and Polito(2011)]{orspolber1}
E. Orsingher and F. Polito.
\newblock On a fractional linear birth-death process.
\newblock \emph{Bernoulli}, 17(1): 114 -- 137, 2011.
  
  
\bibitem[Orsingher and Polito(2013)]{orspolber2}
E. Orsingher and F. Polito.
\newblock Randomly stopped nonlinear fractional birth processes.
\newblock \emph{Stochastic Analysis and Applications}, 31(2): 262 -- 292, 2013.
  
  
  \bibitem[Orsingher and Toaldo(2015)]{orstoa}
E. Orsingher and B. Toaldo.
\newblock Counting processes with Bernstein intertimes and random jumps.
\newblock \emph{Journal of Applied Probability}, 2015 (to appear).
  

  
  
\bibitem[Phillips(1952)]{phillips}
R. Phillips.
\newblock On the generation of semigroups of linear operators.
\newblock \emph{Pacific Journal of Mathematics}, 2: 343 -- 369, 1952.
  
  
  
  
  
  
\bibitem[Uchaikin et al.(1952)]{sibatov}
V. Uchaikin, O. Cahoy and T. Sibatov.
\newblock Fractional processes: from Poisson to branching one.
\newblock \emph{International Journal of Bifurcation and Chaos}, 18: 2717 -- 2725, 2008.
  
  
  
\end{thebibliography}
\end{document}